\newcommand{\N}{\mathbb{N}}
\newcommand{\Z}{\mathbb{Z}}
\newtheorem{theorem}{Theorem}
\newtheorem{lemma}[theorem]{Lemma}
\newcounter{Constant}[chapter]
\newcounter{constant}[chapter]
\newcommand{\Cst}[1]
{
\ifthenelse{\value{#1}=0}
  {\addtocounter{Constant}{1}\setcounter{#1}{\value{Constant}}f_{\arabic{#1}}}
  {f_{\arabic{#1}}}
}
\newcommand{\cst}[1]
{
\ifthenelse{\value{#1}=0}
  {\addtocounter{constant}{1}\setcounter{#1}{\value{constant}}c_{\arabic{#1}}}
  {c_{\arabic{#1}}}
}
\title{\vspace{-\baselineskip}\sffamily\bfseries The generalized Catalan equation in positive characteristic}
\author{Peter Koymans \\{\tt p.h.koymans@math.leidenuniv.nl}}
\date{\today}
\begin{document}
\maketitle
\label{poschar}
\noindent In this article we will bound $m$ and $n$ for the generalized Catalan equation in characteristic $p > 0$. \\

\noindent \textbf{Notation} \\
Let $K = \mathbb{F}_p(z_1, \ldots, z_r)$ be a finitely generated field over $\mathbb{F}_p$. Our main result in this article is as follows.

\begin{theorem}
Let $a, b \in K^\ast$ be given. Consider the equation
\begin{equation}
\label{Cat}
ax^m + by^n = 1
\end{equation}
in $x, y \in K$ and integers $m, n > 1$ coprime with $p$ satisfying
\begin{align}
\label{mn}
(m, n) \not \in \{(2, 2), (2, 3), (3, 2), (2, 4), (4, 2), (3, 3)\}.
\end{align}
Then there is a finite set $\mathcal{T} \subseteq K^2$ such that for any solution $(x, y, m, n)$ of (\ref{Cat}), there is a $(\gamma, \delta) \in \mathcal{T}$ and $t \in \mathbb{Z}_{\geq 0}$ such that
\begin{align}
\label{result}
ax^m = \gamma^{p^t}, by^n = \delta^{p^{t}}.
\end{align}
\end{theorem}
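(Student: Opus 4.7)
The plan is to recast (\ref{Cat}) as a two-variable $S$-unit equation $u + v = 1$ via $u = ax^m$ and $v = by^n$, and then to apply an $abc$-style inequality after removing the unavoidable Frobenius twist. Choose $t \geq 0$ maximal with $u = \gamma^{p^t}$ and $v = \delta^{p^t}$ for some $\gamma, \delta \in K$; such a $t$ exists because the $p$-divisibility of the valuations of $u$ and $v$ is bounded. Using $(\gamma + \delta)^{p^t} = \gamma^{p^t} + \delta^{p^t} = 1$ together with $X^{p^t} - 1 = (X - 1)^{p^t}$ in characteristic $p$, we get $\gamma + \delta = 1$; by the maximality of $t$, not both $\gamma$ and $\delta$ are $p$-th powers in $K$, so, after swapping if necessary, $\gamma$ is not.

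Next, apply the Mason--Brownawell--Masser inequality for $\gamma + \delta = 1$ (after descending, if needed, to a one-variable function subfield of $K$ large enough to contain $a, b, \gamma, \delta$):
\[
H(\gamma) \leq \max\bigl(0,\, 2g - 2 + |S|\bigr),
\]
where $S$ is the set of places where $\gamma$ or $\delta$ has a zero or pole. The heart of the argument is the bound on $|S|$. Let $S_0$ be the fixed finite set of places where $a$ or $b$ has nontrivial valuation. For $v \notin S_0$, the identity $p^t\,v(\gamma) = v(u) = m\,v(x)$ together with $\gcd(m, p) = 1$ forces $v(\gamma) \in m\mathbb{Z}$, and analogously $v(\delta) \in n\mathbb{Z}$. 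At any place $v \notin S_0$ which is a pole of both $\gamma$ and $\delta$, the identity $\gamma + \delta = 1$ forces $v(\gamma) = v(\delta)$, hence $|v(\gamma)|$ is divisible by $\operatorname{lcm}(m, n)$. Counting zeros of $\gamma$, zeros of $\delta$, and common poles outside $S_0$ with these multiplicity lower bounds, and using $H(\gamma) = H(\delta)$ (from $\delta = 1 - \gamma$), one obtains
\[
|S| \leq |S_0| + H(\gamma)\left(\frac{1}{m} + \frac{1}{n} + \frac{\gcd(m, n)}{mn}\right).
\]
Substituting and isolating $H(\gamma)$ gives
\[
H(\gamma)\left(1 - \frac{m + n + \gcd(m, n)}{mn}\right) \leq 2g - 2 + |S_0|,
\]
and a short case check shows that the bracketed coefficient is strictly positive precisely when $(m, n)$ avoids the list (\ref{mn}). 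In each remaining case we therefore obtain a uniform bound $H(\gamma) \leq C$ with $C$ depending only on $a, b, K$.

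Finally, with $H(\gamma)$ uniformly bounded and the divisibility $v(\gamma) \in m\mathbb{Z}$ holding outside $S_0$, the support of $\gamma$ (and hence of $\delta$) lies in a set of size at most $|S_0| + C$, bounded independently of the solution; the standard finiteness of functions of bounded height with prescribed finite support then produces finitely many admissible $(\gamma, \delta)$, which we take as $\mathcal{T}$. The main technical obstacle I anticipate is not the combinatorial matching of the coefficient condition with (\ref{mn})---this is a direct check---but the execution of Mason's inequality and of the bounded-height finiteness over a finitely generated field $K$ of possibly high transcendence degree, requiring a careful choice of one-variable subfield containing all the relevant data and a verification that the constants produced in the descent do not depend on the individual solution $(x, y, m, n)$.
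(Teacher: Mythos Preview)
Your overall strategy matches the paper's: Frobenius-untwist to $\gamma + \delta = 1$ with $\gamma \notin K^p$, apply an ABC inequality, bound the relevant set of places via $m \mid v(\gamma)$, $n \mid v(\delta)$, and $\operatorname{lcm}(m,n) \mid v(\gamma) = v(\delta)$ at common poles outside the support of $a, b$, and absorb the factor $\tfrac{1}{m}+\tfrac{1}{n}+\tfrac{1}{\operatorname{lcm}(m,n)} < 1$ (equivalent to (\ref{mn})) to bound $h_K(\gamma)$ uniformly in the solution. The difference is in the ABC step, where you have correctly identified but not resolved the obstacle. Descending to a one-variable subfield containing $a, b, \gamma, \delta$ makes the genus and $|S_0|$ solution-dependent; alternatively, viewing $K$ as a one-variable function field over a fixed $K_0$ of transcendence degree one less keeps those constants uniform, but then $K_0$ is infinite and imperfect, so bounded height no longer yields finiteness and the derivative hypothesis in Mason's theorem becomes delicate. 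The paper sidesteps this entirely by invoking the Hsia--Wang ABC theorem for higher-dimensional function fields, which applies directly to $K$ as the function field of a normal projective variety $W$ over $\overline{\mathbb{F}_p}$ and yields $h_K(\gamma : \delta) \le C_W + \sum_{v \in S} \deg v$ with $C_W$ depending only on $W$; the hypothesis required there is linear independence of $\gamma, \delta$ over $K^p$, checked from $\gamma, \delta \notin K^p$ together with $\gamma + \delta = 1$. Finiteness then follows from the Northcott property over the finite constant field $\mathbb{F}_q$, so your extra support-counting step is unnecessary (and in any case you obtain only bounded \emph{cardinality} of the support, not a prescribed support).

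Two smaller points. You must treat separately the case $ax^m \in \overline{\mathbb{F}_p}$, since then $u$ is a $p^t$-th power for every $t$ and no maximal $t$ exists; the paper handles this by $2 h_K(x) \le m h_K(x) \le h_K(a^{-1})$. Also, $\gamma + \delta = 1$ forces that if one of $\gamma, \delta$ lies in $K^p$ then so does the other (apply Frobenius to $1 - \delta$), so at the maximal $t$ \emph{neither} is a $p$-th power, not merely ``not both''; this stronger fact is what feeds into the Hsia--Wang hypothesis.
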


\noindent \textbf{Discussion of Theorem 1} \\
In the case $a = b = 1$, a stronger and effective result was proven in \cite{MThesis} based on the work of \cite{Brindza}.

Let us now show that the conditions on $m$ and $n$ are necessary. If (\ref{mn}) fails, then (\ref{Cat}) defines a curve of genus 0 or 1 over $K$. It is clear that (\ref{result}) can fail in this case. It is also essential that $m$ and $n$ are coprime with $p$. Take for example $a = b = 1$. Then any solution of
\[
x + y = 1
\]
with $x, y \in K$ and $x, y \not \in \overline{\mathbb{F}_p}$ gives infinitely many solutions of the form (\ref{result}) after applying Frobenius.

The generalized Catalan equation over function fields was already analyzed in \cite{Silverman}, where the main theorem claims that the generalized Catalan equation has no solutions for $m$ and $n$ sufficiently large. Unfortunately, it is not hard to produce counterexamples to the main theorem given there. Following the notation in \cite{Silverman}, we choose $k = \mathbb{F}_p$, $K = k(u)$, $a = x = u$, $b = y = 1 - u$ and $m = n = p^t - 1$ for $t \in \mathbb{Z}_{\geq 0}$. Then we have
\[
ax^m + by^n = u \cdot u^{p^{t} - 1} + (1 - u) \cdot (1 - u)^{p^{t} - 1} = 1
\] 
due to Frobenius, illustrating the need of (\ref{result}). \\

\noindent \textbf{Proof of Theorem 1} \\
For our proof we will need a generalization of Mason's ABC-theorem for function fields in one variable to an arbitrary number of variables. Such a result is given in \cite{HW}.

\begin{proof}
Let $(x, y, m, n)$ be an arbitrary solution. Without loss of generality we may assume that $\{z_1, \ldots, z_k\}$ forms a transcendence basis of $K/\mathbb{F}_p$.

We write $t := r - k$ and rename $z_{k + 1}, \ldots, z_r$ as $y_1, \ldots, y_t$ respectively. Define
\[
A_0 := \mathbb{F}_p[z_1, \ldots, z_k], \quad K_0 := \mathbb{F}_p(z_1, \ldots, z_k).
\]
Then
\[
A = A_0[y_1, \ldots, y_t], \quad K = K_0(y_1, \ldots, y_t).
\]
Note that $A$ and $K$ are respectively the coordinate ring and the function field of an affine algebraic variety $V$ over $\mathbb{F}_p$. Then the algebraic closure of $\mathbb{F}_p$ in $K$ is a finite extension of $\mathbb{F}_p$, say $\mathbb{F}_q$ with $q = p^n$ for some $n \in \Z_{>0}$. Because $\mathbb{F}_q$ is perfect, it follows that $K$ is separably generated over $\mathbb{F}_q$, see Theorem 26.2 in \cite{Matsumura}.

Taking the projective closure of $V$ and normalizing gives a projective variety non-singular in codimension one, still with function field $K$. Let us call the variety thus obtained $W$. Our goal will be to introduce a height on $K$. For later purposes it will be useful to do this in a slightly more general setting. So let $X$ be a projective variety, non-singular in codimension one, defined over a perfect field $k$. We write $L$ for the function field of $X$ and we assume that $k$ is algebraically closed in $L$.

Fix a projective embedding of $X$ such that $X \subseteq \mathbb{P}^M_k$ for some positive integer $M$. Then a prime divisor $\mathfrak{p}$ of $X$ over $k$ is by definition an irreducible subvariety of codimension one. Recall that for a prime divisor $\mathfrak{p}$ the local ring $\mathcal{O}_\mathfrak{p}$ is a discrete valuation ring, since $X$ is non-singular in codimension one. Following \cite{L2} we will define heights on $X$. To do this, we start by defining a set of normalized discrete valuations
\[
M_L := \{\text{ord}_\mathfrak{p} : \mathfrak{p} \text{ prime divisor of } X\},
\]
where $\text{ord}_\mathfrak{p}$ is the normalized discrete valuation of $L$ corresponding to $\mathcal{O}_\mathfrak{p}$. If $v = \text{ord}_\mathfrak{p} \in M_L$, we define for convenience $\deg v := \deg \mathfrak{p}$ with $\deg \mathfrak{p}$ being the projective degree in $\mathbb{P}^M_k$. Then the set $M_L$ satisfies the sum formula for all $x \in L^\ast$
\[
\sum_v v(x) \deg v = 0.
\]
If $P$ is a point in $\mathbb{P}^r(L)$ with coordinates $(y_0 : \ldots : y_r)$ in $L$, then its (logarithmic) height is
\[
h_L(P) = -\sum_{v} \min_i \{v(y_i)\} \deg v.
\]
Furthermore we define for an element $x \in L$
\begin{align}
\label{height2}
h_L(x) = h_L(1 : x).
\end{align}
We will need the following properties of the height.

\begin{lemma}
\label{lheight}
Let $x, y \in L$ and $n \in \Z$. The height defined by (\ref{height2}) has the following properties:
\begin{enumerate}
\item[(a)] $h_L(x) = 0 \Leftrightarrow x \in k$;
\item[(b)] $h_L(x + y) \leq h_L(x) + h_L(y)$;
\item[(c)] $h_L(xy) \leq h_L(x) + h_L(y)$;
\item[(d)] $h_L(x^n) = nh_L(x)$;
\item[(e)] Suppose that $k$ is a finite field and let $C > 0$ be given. Then there are only finitely many $x \in L^\ast$ satisfying $h_L(x) \leq C$;
\item[(f)] $h_L(x) = h_{\overline{k} \cdot L}(x)$.
\end{enumerate}
\end{lemma}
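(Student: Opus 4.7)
The plan is to reduce every part to the reformulation $h_L(x) = \sum_v \max\{0, -v(x)\} \deg v$, i.e.\ the degree of the polar divisor of $x$. Parts (b), (c), (d) then follow directly from the valuation-theoretic identities $v(x+y) \geq \min\{v(x), v(y)\}$, $v(xy) = v(x) + v(y)$ and $v(x^n) = n v(x)$, combined with the elementary inequality $\max\{0, -a - b\} \leq \max\{0, -a\} + \max\{0, -b\}$. For (d) with $n < 0$ I would first apply the sum formula $\sum_v v(x) \deg v = 0$ to deduce $h_L(1/x) = h_L(x)$, after which the positive case reduces the assertion (interpreting $n h_L(x)$ as $|n| h_L(x)$, since both sides are nonnegative).

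For (a), I would argue that $h_L(x) = 0$ forces $v(x) \geq 0$ for every prime divisor, and then the sum formula forces $v(x) = 0$ for every $v$. The nontrivial step is to conclude $x \in k$. Since $X$ is only assumed non-singular in codimension one, I would pass to the normalization $\pi \colon \tilde X \to X$: this is a finite birational morphism with the same function field $L$ and the same set of codimension-one valuations (so $h_L$ is unchanged), while $\tilde X$ is normal and projective. On a normal projective variety a rational function that is regular in codimension one is globally regular, so $x \in H^0(\tilde X, \mathcal{O}_{\tilde X})$, and the latter equals $k$ because $k$ is algebraically closed in $L$. The reverse implication is immediate since constants have valuation zero everywhere.

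For (e), I would show that any $x$ with $h_L(x) \leq C$ lies in the Riemann--Roch space $H^0(X, \mathcal{O}(D))$ for some effective divisor $D$ of degree $\leq C$ (namely $D = (x)_\infty$). The crucial input is that over a finite field $k = \mathbb{F}_q$ there are only finitely many prime divisors of any bounded projective degree, hence finitely many admissible $D$; and each $H^0(X, \mathcal{O}(D))$ is a finite-dimensional $\mathbb{F}_q$-vector space, hence a finite set. Taking the union of these finitely many finite sets yields the claim. This is the step I expect to require the most care, because it relies on both a bounded-degree counting argument for divisors and on the finite-dimensionality of Riemann--Roch spaces on the possibly singular variety $X$; again the cleanest route is to lift everything to the normalization $\tilde X$ where these facts are standard.

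Finally, for (f) the claim is that $h_L(x)$ is computed identically from the valuations on $\overline{k} \cdot L$. Since $k$ is perfect, the base change $X_{\overline{k}} \to X$ is \'etale on a dense open, and in particular every $v \in M_L$ decomposes into unramified valuations $w \in M_{\overline k \cdot L}$ with $\deg v = \sum_{w \mid v} [\kappa(w) : \kappa(v)] \cdot \deg w$ for the chosen projective embedding base changed to $\overline{k}$. Substituting into the defining sum and grouping terms $w$ over their restriction $v$ gives $h_{\overline{k}\cdot L}(x) = h_L(x)$. The only thing to verify is that the projective-degree conventions are compatible under base change, which is a direct computation.
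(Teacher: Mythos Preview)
Your proposal is correct and considerably more detailed than the paper's own treatment. The paper does not actually prove Lemma~\ref{lheight}: it cites Proposition~4 of Lang's \textit{Introduction to Algebraic Geometry} for (a), declares (b)--(d) ``easily verified'', cites Masser for (e), and points to Lang's \textit{Fundamentals of Diophantine Geometry} (after Proposition~3.2) for (f). Your direct verifications of (b)--(d) via the valuation identities match the intended easy argument, and your observation that (d) for $n<0$ only holds as $|n|\,h_L(x)$ is a fair caveat the paper glosses over. For (a), (e), and (f) you instead supply self-contained proofs via passage to the normalization, a Riemann--Roch-type finiteness count, and base-change compatibility of projective degrees. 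This buys independence from the cited literature at the price of invoking a few nontrivial facts in your own argument: extension of regular functions across codimension~$\geq 2$ loci on a normal variety, finiteness of prime divisors of bounded projective degree over a finite field (essentially a Chow-variety or Hilbert-scheme finiteness statement), and unramifiedness of the constant-field extension when $k$ is perfect. All of these are standard, so your route is sound; just be aware that the step in (e) asserting finitely many prime divisors of bounded degree is where the genuine content lies, and is exactly what the paper outsources to Masser.
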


\begin{proof}
Property (a) is Proposition 4 of \cite{L1} (p. 157), while properties (b), (c) and (d) are easily verified. Property (e) is proven in \cite{Masser}. Finally, property (f) can be found after Proposition 3.2 in \cite{L2} (p. 63).
\end{proof}

Let us now dispose with the case $ax^m \in \mathbb{F}_q$. Then
\[
2h_K(x) \leq mh_K(x) = h_K(x^m) \leq h_K(ax^m) + h_K(a^{-1}) = h_K(a^{-1}),
\]
hence there are only finitely many possibilities for $x$. Now observe that $ax^m \in \mathbb{F}_q$ implies $by^n \in \mathbb{F}_q$. By the same argument we get finitely many possibilities for $y$, so we are done in this case.

From now on we will assume $ax^m \not \in \mathbb{F}_q$ and hence $by^n \not \in \mathbb{F}_q$. Then it follows that
\[
h_K(ax^m), h_K(by^n) \neq 0.
\]
Write
\[
ax^m = \gamma^{p^t}, by^n = \delta^{p^s}
\]
for some $t, s \in \mathbb{Z}_{\geq 0}$ and $\gamma, \delta \not \in K^p$. After substitution we get
\[
\gamma^{p^t} + \delta^{p^s} = 1.
\]
Extracting $p$-th roots gives $t = s$ and hence
\begin{align}
\label{Mason}
\gamma + \delta = 1.
\end{align}
Our goal will be to apply the main theorem of \cite{HW} to (\ref{Mason}). For completeness we repeat it here.

\begin{theorem}
\label{HWT}
Let $X$ be a projective variety over an algebraically closed field $k$ of characteristic $p > 0$, which is non-singular in codimension one. Let $L = k(X)$ be its function field and let $M_L$ be as above. Let $L_1, \ldots, L_q$, $q \geq n + 1$, be linear forms in $n + 1$ variables over $k$ which are in general position. Let $\mathbf{X} = (x_0 : \ldots : x_n) \in \mathbb{P}^n(L)$ be such that $x_0, \ldots, x_n$ are linearly independent over $K^{p^m}$ for some $m \in \N$. Then, for any fixed finite subset $S$ of $M_L$, the following inequality holds:
\begin{align*}
&(q - n - 1) h(x_0 : \ldots : x_n) \\
& \ \leq \sum_{i = 1}^q \sum_{v \not \in S} \deg v \min\{np^{m - 1}, v(L_i(\mathbf{X})) - \min_{0 \leq j \leq n} \{v(x_j)\}\} \\
& \ \ \ + \frac{n(n + 1)}{2} p^{m - 1} \left(C_X  + \sum_{v \in S} \deg v \right), 
\end{align*}
where $C_X$ is a constant depending only on $X$.
\end{theorem}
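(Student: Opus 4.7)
The plan is to prove this via a generalized-Wronskian argument of Mason–Brownawell–Masser type, adapted to characteristic $p$ through Hasse–Schmidt (divided-power) derivations and to higher-dimensional function fields through a choice of separating transcendence basis. This is the strategy originally used by Voloch and Garcia–Voloch in the curve case, which Hu and Wang extend to arbitrary $X$.

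First I would fix a separating transcendence basis $t_1, \ldots, t_d$ of $L/k$; this exists because $k$ is algebraically closed (hence perfect) and $X$ is geometrically integral and normal in codimension one. Let $D^{(\alpha)}$ denote the associated Hasse–Schmidt derivations indexed by multi-indices $\alpha \in \mathbb{N}^d$. The key algebraic input is the characteristic-$p$ Wronskian criterion: for a suitable family of multi-indices $(\alpha_0, \ldots, \alpha_n)$ with $|\alpha_j| < np^{m-1}$, the generalized Wronskian
\[
W := \det\bigl(D^{(\alpha_j)}(x_i)\bigr)_{0 \leq i, j \leq n}
\]
is nonzero \emph{if and only if} $x_0, \ldots, x_n$ are linearly independent over $L^{p^m}$, which is precisely our hypothesis. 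Since the forms $L_1, \ldots, L_q$ are in general position, any $n+1$ of them are a linear basis, so by multilinearity $W$ coincides, up to a nonzero constant in $k^\ast$, with $\det\bigl(D^{(\alpha_j)}(L_{i_k}(\mathbf{X}))\bigr)$ for any choice of indices $i_0 < \cdots < i_n$.

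Next I would carry out the local analysis. Fix a place $v \in M_L$ and, for this place, choose the indices $i_0, \ldots, i_n$ so that $L_{i_0}(\mathbf{X}), \ldots, L_{i_n}(\mathbf{X})$ have the largest $v$-values among the $L_j(\mathbf{X})$; the remaining $q - n - 1$ forms then contribute the desired truncated valuations. A Leibniz-type computation bounds each entry $v\bigl(D^{(\alpha_j)}(L_{i_k}(\mathbf{X}))\bigr)$ from below by $v(L_{i_k}(\mathbf{X})) - |\alpha_j|\cdot(\text{pole order of the derivations at }v)$, and an expansion of the determinant turns this into the asserted bound
\[
v(W) \;\geq\; \sum_{k} v(L_{i_k}(\mathbf{X})) \;-\; \tfrac{n(n+1)}{2}\,p^{m-1} \cdot (\text{local correction at }v),
\]
with the truncation at $np^{m-1}$ arising because Hasse derivatives of order $< p^m$ annihilate $p^m$-th powers. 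Finally, apply the sum formula $\sum_v v(W)\deg v = 0$ to $W \in L^\ast$, split the sum into $v\in S$, $v\notin S$, and the places supporting the pole divisor of the $D^{(\alpha)}$, then compare with the definition $h(\mathbf{X}) = -\sum_v \min_j v(x_j)\deg v$. Rearranging yields the inequality, with $C_X$ absorbing the total degree of the pole divisor of the chosen Hasse derivations on $X$.

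The main obstacle is steps (i) choosing the multi-indices $\alpha_j$ so that the generalized Wronskian criterion over $L^{p^m}$ actually applies in this precise form (the curve case uses $\alpha_j = j$; in higher dimensions and characteristic $p$ one needs a more subtle combinatorial choice, due to Garcia–Voloch), and (ii) controlling the geometric constant $C_X$ uniformly in $\mathbf{X}$. The latter amounts to bounding the degree of the pole divisor of $D^{(\alpha)}t_i$ with respect to the projective embedding, which in the curve case reduces to $2g-2$ but in general requires an intersection-theoretic (Riemann–Roch type) estimate on $X$. Once these two ingredients are in place, the Wronskian/sum-formula machinery runs exactly as in the classical proof.
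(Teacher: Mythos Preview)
The paper does not prove this theorem at all: its entire proof is the single line ``See Theorem in \cite{HW}.'' In other words, Theorem~\ref{HWT} is quoted verbatim from Hsia and Wang and invoked as a black box; the present paper contributes no argument toward it.

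Your proposal, by contrast, is a genuine outline of how such a theorem is actually proved, and it is broadly on target: the Hasse--Schmidt (divided-power) Wronskian, the nonvanishing criterion equivalent to linear independence over $L^{p^m}$, the change-of-basis trick using general position, the local estimate via the Leibniz rule, and the sum formula applied to $W$ are exactly the ingredients in the Voloch / Brownawell--Masser / Hsia--Wang lineage. Your identification of the two delicate points (the combinatorics of choosing the multi-indices $\alpha_j$ in several variables and characteristic $p$, and the uniform control of the pole divisor of the derivations giving $C_X$) is accurate; these are precisely where the work lies in \cite{HW}. One small correction: the authors are Hsia and Wang, not ``Hu and Wang''.

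So there is nothing to compare at the level of mathematical strategy: you have supplied a proof sketch where the paper supplies only a citation. If your goal is to match the paper, a one-line reference to \cite{HW} suffices; if your goal is to understand or reproduce the result, your outline is a sound starting point, and what remains is to fill in the two technical steps you yourself flagged.
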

\begin{proof}
See Theorem in \cite{HW}.
\end{proof}

Note that Theorem \ref{HWT} requires that the ground field $k$ is algebraically closed. But a constant field extension does not change the height by Lemma \ref{lheight}(f). Hence we can keep working with our field $K$ instead of $\overline{\mathbb{F}_p} \cdot K$. Define the following three linear forms in two variables $X, Y$
\begin{align*}
L_1 &= X \\
L_2 &= Y \\
L_3 &= X + Y.
\end{align*}
We apply Theorem \ref{HWT} with our $W$, the above $L_1, L_2, L_3$ and $\mathbf{X} = (\gamma : \delta) \in \mathbb{P}^1(K)$. We claim that $\gamma$ and $\delta$ are linearly independent over $K^p$. Suppose that there are $e, f \in K^p$ such that
\[
e \gamma + f \delta = 0.
\]
Together with $\gamma + \delta = 1$ we find that
\[
0 = e \gamma + f \delta = e (1 - \delta) + f \delta = e + (f - e) \delta.
\]
If $e \neq f$, then this would imply that $\delta \in K^p$, contrary to our assumptions. Hence $e = f$, but then we find
\[
0 = e \gamma + f \delta = e
\]
and we conclude that $e = f = 0$ as desired.

We still have to choose the subset $S$ of $M_K$ to which we apply Theorem \ref{HWT}. First we need to make some preparations. From now on $v$ will be used to denote an element of $M_K$. Define
\begin{align*}
N_0 &:= \{v : v(a) \neq 0 \vee v(b) \neq 0\} \\
N_1 &:= \{v : v(a) = 0, v(b) = 0, v(\gamma) > 0\} \\
N_2 &:= \{v : v(a) = 0, v(b) = 0, v(\delta) > 0\} \\
N_3 &:= \{v : v(a) = v(b) = 0, v(\gamma) = v(\delta) < 0\}.
\end{align*}

It is clear that $N_0$, $N_1$, $N_2$ and $N_3$ are finite disjoint sets. Before we proceed, we make a simple but important observation in the form of a lemma.

\begin{lemma}
\label{gd}
Let $(\gamma, \delta)$ be a solution of (\ref{Mason}). If $v(\gamma) < 0$ or $v(\delta) < 0$, then
\[
v(\gamma) = v(\delta) < 0.
\]
\end{lemma}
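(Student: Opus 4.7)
The plan is to use the non-Archimedean nature of the valuation $v$, together with the constraint $\gamma + \delta = 1$ coming from (\ref{Mason}).

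First, I would observe that since $v$ is a discrete valuation on $K$ and $\gamma + \delta = 1$, we have
\[
v(\gamma + \delta) = v(1) = 0.
\]
The key tool is the standard ultrametric strengthening of the triangle inequality: for any discrete valuation we have $v(\alpha + \beta) \geq \min(v(\alpha), v(\beta))$, with equality whenever $v(\alpha) \neq v(\beta)$.

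Now suppose by way of contradiction that $v(\gamma) \neq v(\delta)$. Then the equality case of the ultrametric inequality gives
\[
0 = v(\gamma + \delta) = \min(v(\gamma), v(\delta)).
\]
But by hypothesis at least one of $v(\gamma), v(\delta)$ is strictly negative, so $\min(v(\gamma), v(\delta)) < 0$, a contradiction. Hence $v(\gamma) = v(\delta)$. Combined with the hypothesis that at least one of these values is negative, we conclude $v(\gamma) = v(\delta) < 0$, as required.

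There is no real obstacle here; this is essentially a one-line consequence of the ultrametric inequality, recorded as a lemma only because it will be applied repeatedly when partitioning $M_K$ into the sets $N_0, N_1, N_2, N_3$ above. The only thing to check carefully is the equality case of the strong triangle inequality, which is entirely standard and holds for any non-Archimedean valuation.
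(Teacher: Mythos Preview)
Your proof is correct and is precisely the standard ultrametric argument the paper has in mind; the paper's own proof consists of the single word ``Obvious.'' There is nothing to add.
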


\begin{proof}
Obvious.
\end{proof}

Recall that
\[
h_K(\gamma) = \sum_v \max(0, v(\gamma)) \deg v = \sum_v -\min(0, v(\gamma)) \deg v
\]
and
\[
h_K(\delta) = \sum_v \max(0, v(\delta)) \deg v = \sum_v -\min(0, v(\delta)) \deg v.
\]
Lemma \ref{gd} tells us that
\[
\sum_v -\min(0, v(\gamma)) \deg v= \sum_v -\min(0, v(\delta)) \deg v,
\]
hence
\begin{align}
\label{height}
h_K(\gamma) = h_K(\delta) &= \sum_v \max(0, v(\gamma)) \deg v = \sum_v -\min(0, v(\gamma)) \deg v \\
&= \sum_v \max(0, v(\delta)) \deg v = \sum_v -\min(0, v(\delta)) \deg v.
\end{align}
We will use these different expressions for the height throughout. Let us now derive elegant upper bounds for $N_1$, $N_2$ and $N_3$. Again we will phrase it as a lemma.

\begin{lemma}
\label{N}
Let $(\gamma, \delta)$ be a solution of (\ref{Mason}). Then 
\begin{align*}
h_K(\gamma) &= h_K(\delta) \geq m \sum_{v \in N_1} \deg v, \\
h_K(\gamma) &= h_K(\delta) \geq n \sum_{v \in N_2} \deg v, \\
h_K(\gamma) &= h_K(\delta) \geq \text{lcm}(m, n) \sum_{v \in N_3} \deg v.
\end{align*}
\end{lemma}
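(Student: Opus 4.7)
The plan is to exploit the two factorizations $\gamma^{p^t}=ax^m$ and $\delta^{p^t}=by^n$ at the relevant valuations, together with the fact that $\gcd(m,p)=\gcd(n,p)=1$, to force divisibility of $v(\gamma)$ and $v(\delta)$ by $m$, $n$ or $\mathrm{lcm}(m,n)$. Plugging these divisibilities into the expressions for the height in~(\ref{height}) will give each of the three inequalities.

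First I would treat $N_1$. For $v\in N_1$ we have $v(a)=0$, so applying $v$ to $\gamma^{p^t}=ax^m$ yields
\[
p^t v(\gamma)=v(a)+m\,v(x)=m\,v(x).
\]
Since $\gcd(m,p^t)=1$, it follows that $m\mid v(\gamma)$. Because also $v(\gamma)>0$, this forces $v(\gamma)\ge m$. Now restricting the sum
\[
h_K(\gamma)=\sum_v\max(0,v(\gamma))\deg v
\]
to $v\in N_1$ (where all contributions are nonnegative) gives
\[
h_K(\gamma)\ \ge\ \sum_{v\in N_1}v(\gamma)\deg v\ \ge\ m\sum_{v\in N_1}\deg v,
\]
which is the first bound. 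The argument for $N_2$ is identical using $\delta^{p^t}=by^n$: one shows $n\mid v(\delta)$ and $v(\delta)\ge n$, and then
\[
h_K(\delta)=\sum_v\max(0,v(\delta))\deg v\ \ge\ n\sum_{v\in N_2}\deg v.
\]

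Finally, for $v\in N_3$ both $v(a)$ and $v(b)$ vanish, and $v(\gamma)=v(\delta)<0$. The same two relations now give $m\mid v(\gamma)$ and $n\mid v(\delta)=v(\gamma)$, hence $\mathrm{lcm}(m,n)\mid v(\gamma)$, so $-v(\gamma)\ge\mathrm{lcm}(m,n)$. Using the alternative expression for the height from~(\ref{height}),
\[
h_K(\gamma)=\sum_v-\min(0,v(\gamma))\deg v\ \ge\ \sum_{v\in N_3}(-v(\gamma))\deg v\ \ge\ \mathrm{lcm}(m,n)\sum_{v\in N_3}\deg v,
\]
which is the third bound. The equality $h_K(\gamma)=h_K(\delta)$ in each case is already recorded in~(\ref{height}). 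There is no serious obstacle here: the whole lemma is a bookkeeping consequence of the $p$-th power structure of $ax^m$ and $by^n$ combined with $\gcd(mn,p)=1$; the only point to be careful about is not to double-count, which is handled automatically since $N_1$, $N_2$, $N_3$ are disjoint by Lemma~\ref{gd} and their definitions.
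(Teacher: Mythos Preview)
Your proof is correct and follows essentially the same route as the paper: use $p^t v(\gamma)=m\,v(x)$ (resp.\ $p^t v(\delta)=n\,v(y)$) at places where $v(a)=v(b)=0$ to force $m\mid v(\gamma)$, $n\mid v(\delta)$, hence $v(\gamma)\ge m$ on $N_1$, $v(\delta)\ge n$ on $N_2$, and $-v(\gamma)\ge\mathrm{lcm}(m,n)$ on $N_3$, then feed this into the appropriate expression for the height from~(\ref{height}). The paper's argument is identical in structure and detail.
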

\begin{proof}
We know that
\[
h_K(\gamma) = h_K(\delta) = \sum_v \max(0, v(\gamma)) \deg v \geq \sum_{v \in N_1} \max(0, v(\gamma)) \deg v.
\]
Now let $v \in N_1$. This means that $v(a) = v(b) = 0$ and $v(\gamma) > 0$. Then $ax^m = \gamma^{p^t}$ implies
\[
v(a) + mv(x) = p^t v(\gamma)
\]
and hence $mv(x) = p^t v(\gamma)$. But $m$ and $p$ are coprime by assumption, so we obtain $m \mid v(\gamma)$. Because $v(\gamma) > 0$, this gives $v(\gamma) \geq m$ and we conclude that
\[
h_K(\gamma) = h_K(\delta) \geq m \sum_{v \in N_1} \deg v.
\]
Using
\[
h_K(\gamma) = h_K(\delta) = \sum_v \max(0, v(\delta)) \deg v \geq \sum_{v \in N_2} \max(0, v(\delta)) \deg v,
\]
we find in a similar way that
\[
h_K(\gamma) = h_K(\delta) \geq n \sum_{v \in N_2} \deg v.
\]
It remains to be proven that
\[
h_K(\gamma) = h_K(\delta) \geq \text{lcm}(m, n) \sum_{v \in N_3} \deg v.
\]
Now we use
\begin{align*}
h_K(\gamma) = h_K(\delta) &= \sum_v - \min(0, v(\gamma)) \deg v = \sum_v - \min(0, v(\delta)) \deg v \\
&\geq \sum_{v \in N_3} - \min(0, v(\gamma)) \deg v = \sum_{v \in N_3} - \min(0, v(\delta)) \deg v.
\end{align*}
Now take $v \in N_3$. Then $v(\gamma) = v(\delta) < 0$. In the same way as before, we can show that $m \mid v(\gamma)$ and $n \mid v(\delta)$. But $v(\gamma) = v(\delta) < 0$ by Lemma \ref{gd}, so we find that
\[
h_K(\gamma) = h_K(\delta) \geq \text{lcm}(m, n) \sum_{v \in N_3} \deg v
\]
as desired.
\end{proof}

Define
\[
S := N_0 \cup N_1 \cup N_2 \cup N_3.
\]
Suppose that $v \not \in S$. We claim that
\[
v(\gamma) = v(\delta) = 0.
\]
But $v \not \in S$ implies $v \not \in N_0$, so certainly $v(a) = v(b) = 0$. Furthermore, we have that $v \not \in N_1$ and $v \not \in N_2$, which means that $v(\gamma) \leq 0$ and $v(\delta) \leq 0$. If $v(\gamma) < 0$ or $v(\delta) < 0$, then Lemma \ref{gd} gives $v \in N_3$, contradicting our assumption $v \not \in S$. Hence $v(\gamma) = v(\delta) = 0$ as desired.

From our claim it follows that we have for $v \not \in S$ and $i = 1, 2, 3$
\[
v(L_i(\gamma, \delta)) = \min(v(\gamma), v(\delta)).
\]
Theorem \ref{HWT} tells us that
\[
h_K(\gamma : \delta) \leq C_W + \sum_{v \in S} \deg v,
\]
where $C_W$ is a constant depending on $W$ only. By Lemma \ref{N} we find that
\begin{align*}
\sum_{v \in S} \deg v &= \sum_{v \in N_0} \deg v + \sum_{v \in N_1} \deg v + \sum_{v \in N_2} \deg v + \sum_{v \in N_3} \deg v \\
&\leq C_{a, b} + \left(\frac{1}{m} + \frac{1}{n} + \frac{1}{\text{lcm}(m, n)}\right) h_K(\gamma),
\end{align*}
where $C_{a, b}$ is a constant depending on $a$ and $b$ only. Now (\ref{mn}) implies
\[
\frac{1}{m} + \frac{1}{n} + \frac{1}{\text{lcm}(m, n)} < 0.9,
\]
hence
\[
h_K(\gamma : \delta) \leq 10(C_W + C_{a, b}).
\]
But $\gamma + \delta = 1$ gives
\[
h_K(\gamma) = h_K(\delta) = h_K(\gamma : \delta).
\]
The theorem now follows from Lemma \ref{lheight}(e).
\end{proof}

\newpage

\noindent \textbf{Discussion of Theorem 1} \\
The conclusion of Theorem 1 tells us that there is a finite set $\mathcal{T} \subseteq K^2$ such that for any solution $(x, y, m, n)$ of (\ref{Cat}), there is a $(\gamma, \delta) \in \mathcal{T}$ and $t \in \mathbb{Z}_{\geq 0}$ such that
\[
ax^m = \gamma^{p^t}, by^n = \delta^{p^{t}}.
\]
Since $\mathcal{T}$ is finite, we may assume that $\gamma$ and $\delta$ are fixed in the above two equations. The resulting equation is studied in \cite{PowerEquation}.

\end{document}